\numberwithin{equation}{section}
\DeclareMathOperator{\Ran}{Ran}
\DeclareMathOperator{\spec}{spec}
\DeclareMathOperator{\dist}{dist}
\DeclareMathOperator{\conv}{conv}
\DeclareSymbolFont{SY}{U}{psy}{m}{n}
\DeclareMathSymbol{\emptyset}{\mathord}{SY}{'306}
\DeclarePairedDelimiter{\abs}{|}{|}
\DeclarePairedDelimiter{\norm}{\lVert}{\rVert}
\newcommand{\dd}{\mathrm d}
\newcommand{\eps}{\varepsilon}
\newcommand{\N}{\mathbb{N}}
\newcommand{\R}{\mathbb{R}}
\newcommand{\EE}{\mathsf{E}}
\newcommand{\cH}{{\mathcal H}}
\newcommand{\cO}{{\mathcal O}}
\newtheorem{theorem}{Theorem}[section]{\bf}{\it}
\newtheorem{lemma}[theorem]{Lemma}{\bf}{\it}
\newtheorem{corollary}[theorem]{Corollary}{\bf}{\it}
\newtheorem{proposition}[theorem]{Proposition}{\bf}{\it}
\newtheorem{remark}[theorem]{Remark}{\it}{\rm}
\title[The subspace perturbation problem]{Semidefinite perturbations in the subspace perturbation problem}
\subjclass[2010]{Primary 47A55; Secondary 47A15, 47B15}
\keywords{Subspace perturbation problem, spectral subspaces, maximal angle between closed subspaces, semidefinite perturbations}
\date{}
\author[A.~Seelmann]{Albrecht Seelmann}
\address{A.~Seelmann,
Fakult\"at f\"ur Mathematik, Technische Univer\-si\-t\"at Dortmund,
D-44221 Dortmund, Germany}
\email{albrecht.seelmann@mathematik.tu-dortmund.de}
\begin{document}

\begin{abstract}
 The variation of spectral subspaces for linear self-adjoint operators under an additive bounded semidefinite perturbation is
 considered. A variant of the Davis-Kahan $\sin2\Theta$ theorem from [SIAM J.~Numer.~Anal.~\textbf{7} (1970), 1--46] adapted to
 this situation is proved. Under a certain additional geometric assumption on the separation of the spectrum of the unperturbed
 operator, this leads to a sharp estimate on the norm of the difference of the spectral projections associated with isolated
 components of the spectrum of the perturbed and unperturbed operators, respectively. Without this additional geometric assumption
 on the isolated components of the spectrum of the unperturbed operator, a corresponding estimate is obtained by transferring the
 optimization approach for general perturbations in [J.~Anal.~Math.~\textbf{135} (2018), 313--343] to the present situation.
\end{abstract}

\maketitle

\section{Introduction}\label{sec:intro}

The~\emph{subspace perturbation problem} deals with the variation of spectral subspaces for a self-adjoint operator under an
additive perturbation and has previously been discussed in several recent works such as~\cite{AM13,KMM03,KMM07,MS15,Seel14,Seel16,
Seel18}. In the present work, we continue these considerations and study the problem in the particular case of semidefinite
perturbations.

Let $A$ be a self-adjoint, not necessarily bounded, operator on a separable Hilbert space $\cH$. Moreover, let $V$ be a bounded
self-adjoint operator on $\cH$ which is non-negative, that is, $V\ge 0$. The consideration of non-positive perturbations $V$, that
is, $V\le0$, is analogous and can, in view of the identity $-(A+V)=-A+(-V)$, also be reduced to the case of non-negative
perturbations.

It is well known that a semidefinite perturbation moves the spectrum only in one direction. More precisely, if $(a,b)\subset\R$,
$a<b$, is an interval in the resolvent set of $A$ and $V\ge 0$ satisfies $\norm{V}<b-a$, then the interval $(a+\norm{V},b)$ is
contained in the resolvent set of the perturbed operator $A+V$, see, e.g.,~\cite[Theorem~3.2]{Ves08} and also
Proposition~\ref{prop:semidefinite} below. As a consequence, if the spectrum of $A$ is separated into two disjoint components,
that is,
\begin{equation}\label{eq:intro:spec}
 \spec(A) = \sigma \cup \Sigma \quad\text{ with }\quad d:=\dist(\sigma,\Sigma)>0,
\end{equation}
and if the norm of the perturbation satisfies
\begin{equation}\label{eq:normBound}
 \norm{V}<d,
\end{equation}
then the spectrum of the perturbed operator $A+V$ is likewise separated into two disjoint components,
\begin{equation}\label{eq:intro:specPert}
 \spec(A+V) = \omega \cup \Omega \quad\text{ with }\quad \dist(\omega,\Omega)\ge d-\norm{V} > 0,
\end{equation}
where $\omega$ and $\Omega$ are contained in certain ``right-side'' neighbourhoods of $\sigma$ and $\Sigma$, respectively. Namely,
\begin{equation}\label{eq:intro:defomega}
 \omega = \spec(A+V)\cap\bigl(\sigma+[0,\norm{V}]\bigr)
\end{equation}
and analogously for $\Omega$ (with $\sigma$ replaced by $\Sigma$); here we used the notation
$\sigma+[0,\norm{V}]:=\{\lambda+t \mid \lambda\in\sigma\,,\ 0\le t\le\norm{V}\}$. Clearly,
the~\emph{gap non-closing condition}~\eqref{eq:normBound} is sharp.

The variation of the spectral subspaces associated with the components of the spectrum is studied in terms of the corresponding
spectral projections $\EE_A(\sigma)$ and $\EE_{A+V}(\omega)$, where $\EE_A$ and $\EE_{A+V}$ denote the projection-valued spectral
measures for the self-adjoint operators $A$ and $A+V$, respectively. Here, the quantity
\begin{equation}\label{eq:defmaxangle}
 \theta := \arcsin\bigl(\norm{\EE_A(\sigma)-\EE_{A+V}(\omega)}\bigr)
\end{equation}
is called the~\emph{maximal angle} between the two spectral subspaces $\Ran\EE_A(\sigma)$ and $\Ran\EE_{A+V}(\omega)$. Recall that
always $\norm{\EE_A(\sigma)-\EE_{A+V}(\omega)}\le1$, so that $\theta$ in \eqref{eq:defmaxangle} is well defined. Moreover, if
$\norm{\EE_A(\sigma)-\EE_{A+V}(\omega)}<1$, that is, if $\theta<\pi/2$, then the projections $\EE_A(\sigma)$ and
$\EE_{A+V}(\omega)$ are unitarily equivalent, see, e.g., \cite[Theorem~I.6.32]{Kato66}. In this case, the perturbed subspace
$\Ran\EE_{A+V}(\omega)$ can be understood as a rotation of the unperturbed subspace $\Ran\EE_A(\sigma)$ and the maximal angle
$\theta$ serves as a measure for this rotation.

In this context, it is a natural question whether the condition \eqref{eq:normBound} is sufficient to ensure that $\theta<\pi/2$.
More specifically, we ask for the best possible constant $c_\text{opt-sem}\in(0,1]$ such that
\begin{equation}\label{eq:acute}
 \theta < \frac{\pi}{2} \quad\text{ whenever }\quad \norm{V}<c_\text{opt-sem}\cdot d.
\end{equation}

The analogous problem has previously been discussed for off-diagonal perturbations (see~\cite{KMM07,MS15,Seel16} and the references
therein) and for general, not necessarily semidefinite or off-diagonal, perturbations, see~\cite{KMM03,MS15,AM13,Seel18}. For the
latter, the (sharp) gap non-closing condition reads $\norm{V}<d/2$, in which case instead of $\omega$ in~\eqref{eq:intro:defomega}
the component of $\spec(A+V)$ in $\cO_{d/2}(\sigma)$, the open $d/2$-neighbourhood of $\sigma$, is considered (and similarily for
$\Omega$). Here, one is interested in the best possible constant $c_\text{opt}\in(0,1/2]$ analogous to $c_\text{opt-sem}$
in~\eqref{eq:acute}. Under a certain additional geometric assumption on the spectrum of $A$ it is known that $c_\text{opt}=1/2$ and
a corresponding (sharp) estimate on the maximal angle reads
\[
 \arcsin\bigl(\norm{\EE_A(\sigma)-\EE_{A+V}\bigl(\cO_{d/2}(\sigma)\bigr)}\bigr) \le
 \frac{1}{2}\arcsin\Bigl(2\frac{\norm{V}}{d}\Bigr) < \frac{\pi}{4}
\]
if $\norm{V}<d/2$, see, e.g.,~\cite[Remark 2.9]{Seel14}. Astonishingly, in the present situation of semidefinite perturbations and
with $\cO_{d/2}(\sigma)$ replaced by $\omega$ as in~\eqref{eq:intro:defomega}, the term $\norm{V}$ in this estimate can formally be
replaced by $\norm{V}/2$, thereby allowing the whole scope of semidefinite perturbations satisfying~\eqref{eq:normBound}. The
precise statement is as follows:

\begin{theorem}\label{thm:favGeo}
 Let $A$ be a self-adjoint operator on a separable Hilbert space $\cH$ such that the spectrum of $A$ is separated as
 in~\eqref{eq:intro:spec}. Let $V$ be a non-negative bounded self-adjoint operator on $\cH$ with $\norm{V}<d$, and choose the
 spectral component $\omega$ of $\spec(A+V)$ as in~\eqref{eq:intro:defomega}.
 
 If, in addition, the convex hull of one of the components $\sigma$ and $\Sigma$ is disjoint from the other component, that is,
 $\conv(\sigma)\cap\Sigma=\emptyset$ or vice versa, then
 \begin{equation}\label{eq:favGeo}
  \arcsin\bigl(\norm{\EE_A(\sigma)-\EE_{A+V}(\omega)}\bigr) \le \frac{1}{2}\arcsin\Bigl(\frac{\norm{V}}{d}\Bigr) < \frac{\pi}{4},
 \end{equation}
 and this estimate is sharp.
\end{theorem}

As a consequence, under the additional geometric assumption on the spectrum of $A$ in Theorem~\ref{thm:favGeo}, namely
$\conv(\sigma)\cap\Sigma=\emptyset$ or $\sigma\cap\conv(\Sigma)=\emptyset$, one has $c_\text{opt-sem}=1$.
However, without any additional hypotheses on the spectrum of $A$, that is, under the sole assumption~\eqref{eq:intro:spec}, the
value of $c_\text{opt-sem}$ is still unknown. In the case where $A$ is assumed to be bounded and $V$ has rank one, it has recently
been shown in~\cite[Theorem~2.10]{Gebert18} that
\[
 \norm{\EE_A(\sigma)-\EE_{A+V}(\omega)} \le \frac{\norm{V}}{d}<1 \quad\text{ if }\quad \norm{V}<d,
\]
yielding $c_\text{opt-sem}=1$ in this very particular situation. However, it is also acknowledged there that the corresponding
proof only works for rank one perturbations. For general semidefinite perturbations, only lower bounds on $c_\text{opt-sem}$ can be
given so far. This is quite similar to the case of general, not necessarily semidefinite perturbations mentioned above. There, the
currently best known result~\cite[Theorem~1]{Seel18} states that
\[
 c_\text{opt}\ge c_\text{crit} = \frac{1}{2} - \frac{1}{2}\Bigl(1-\frac{\sqrt{3}}{\pi}\Bigr)^3 = 0{.}4548399\ldots
\]
and
\[
 \arcsin\bigl(\norm{\EE_A(\sigma)-\EE_{A+V}\bigl(\cO_{d/2}(\sigma)\bigr)}\bigr) \le N\Bigl(\frac{\norm{V}}{d}\Bigr) < \frac{\pi}{2}
\]
if $\norm{V} < c_\text{crit}\cdot d$, where $N\colon[0,c_\text{crit}]\to[0,\pi/2]$ is given by
\begin{equation}\label{eq:boundFunc}
 N(x) =
 \begin{cases}
  \frac{1}{2}\arcsin(\pi x) & \text{ for }\quad 0\le x\le \frac{4}{\pi^2+4},\\[0.15cm]
  \arcsin\Bigl(\sqrt{\frac{2\pi^2x-4}{\pi^2-4}}\,\Bigr) & \text{ for }\quad \frac{4}{\pi^2+4} < x < 4\frac{\pi^2-2}
    {\pi^4},\\[0.15cm]
  \arcsin\bigl(\frac{\pi}{2}(1-\sqrt{1-2x}\,)\bigr) & \text{ for }\quad 4\frac{\pi^2-2}{\pi^4} \le x \le \kappa,\\[0.15cm]
  \frac{3}{2}\arcsin\bigl(\frac{\pi}{2}(1-\sqrt[\leftroot{4}3]{1-2x}\,)\bigr) & \text{ for }\quad
    \kappa < x \le c_\text{crit}.
 \end{cases}
\end{equation}
Here, $\kappa\in\bigl(4\frac{\pi^2-2}{\pi^4},2\frac{\pi-1}{\pi^2}\bigr)$ is the unique solution to the equation
\[
 \arcsin\Bigl(\frac{\pi}{2}\bigl(1-\sqrt{1-2\kappa}\,\bigr)\Bigr)=\frac{3}{2}\arcsin\Bigl(\frac{\pi}{2}
 \bigl(1-\sqrt[\leftroot{4}3]{1-2\kappa}\,\bigr)\Bigr)
\]
in the interval $\bigl(0,2\frac{\pi-1}{\pi^2}\bigr]$.

As in the case of Theorem~\ref{thm:favGeo}, it turns out that this result can just as well be adapted to the present situation of
semidefinite perturbations by formally replacing $\cO_{d/2}(\sigma)$ and $\norm{V}$ by $\omega$ and $\norm{V}/2$, respectively. This
leads to the conclusion that $c_\text{opt-sem}\ge2c_\text{crit}$, the second principal result in this work:

\begin{theorem}[cf.~{\cite[Theorem~1]{Seel18}}]\label{thm:generic}
 Let $A$ be a self-adjoint operator on a separable Hilbert space $\cH$ such that the spectrum of $A$ is separated as
 in~\eqref{eq:intro:spec}, and let $V$ and $\omega$ be as in Theorem~\ref{thm:favGeo}. If, in addition, $V$ satisfies
 \[
  \norm{V} < c_\mathrm{crit{\text-}sem}\cdot d
 \]
 with
 \[
  c_\mathrm{crit{\text-}sem} = 1 - \Bigl(1-\frac{\sqrt{3}}{\pi}\Bigr)^3 = 0{.}9096799\dots,
 \] 
 then
 \begin{equation}\label{eq:boundGeneric}
  \arcsin\bigl(\norm{\EE_A({\sigma})-\EE_{A+V}(\omega))}\bigr) \le N\Bigl(\frac{\norm{V}}{2d}\Bigr)
  < \frac{\pi}{2},
 \end{equation}
 where $N$ is given by~\eqref{eq:boundFunc}.
\end{theorem}
A more detailed discussion on the function $N$ can be found in~\cite{Seel18}.

The proofs of Theorems~\ref{thm:favGeo} and~\ref{thm:generic} rely on the following variant of the Davis-Kahan $\sin2\Theta$
theorem for semidefinite perturbations:
\begin{equation}\label{eq:introSin2Theta}
 \norm{\sin2\Theta} \le \frac{\pi}{2}\frac{\norm{V}}{d},
\end{equation}
where $\Theta=\arcsin\abs{\EE_A(\sigma)-\EE_{A+V}(\omega)}$ with $\norm{\Theta}=\theta$ is the operator angle associated with
$\EE_A(\sigma)$ and $\EE_{A+V}(\omega)$; the constant $\pi/2$ here can be replaced by $1$ if $\conv(\sigma)\cap\Sigma=\emptyset$ or
$\sigma\cap\conv(\Sigma)=\emptyset$, see Proposition~\ref{prop:sin2Theta} below. The estimate~\eqref{eq:introSin2Theta} differs
from the corresponding variant for general perturbations in~\cite{Seel14} (cf.~\cite{DK70}) by the lack of a factor $2$ on its
right-hand side, which is the result of a suitable adaptation to the proof presented in~\cite{Seel14}.

The paper is organized as follows:

Section~\ref{sec:semidefinite} is devoted to preliminaries regarding the perturbation of the spectrum by semidefinite
perturbations.

In Section~\ref{sec:sin2Theta}, a variant of the Davis-Kahan $\sin2\Theta$ theorem for semidefinite perturbations is proved and
Theorem~\ref{thm:favGeo} is deduced.

The proof of Theorem~\ref{thm:generic} is presented in Section~\ref{sec:generic}.

Finally, an alternative, direct proof for a variant of the $\sin2\theta$ estimate, related to~\eqref{eq:introSin2Theta} by
$\sin2\theta\le \norm{\sin2\Theta}$, is discussed in Appendix~\ref{app:sin2theta}. This proof is the result of an adaptation to the
corresponding direct proof of the generic $\sin2\theta$ estimate from~\cite[Proposition~3.3]{Seel14}. The key ingredient in this
adaptation, Lemma~\ref{lem:semidefinite} below, may also be of independent interest.

\section{Semidefinite perturbations}

\subsection{Perturbation of the spectrum}\label{sec:semidefinite}

The following result is extracted from the more general statement~\cite[Theorem~3.2]{Ves08}; cf.~also~\cite[Eq.~(9.4.4)]{BS87}.

\begin{proposition}\label{prop:semidefinite}
 Let $A$ be a self-adjoint operator on the Hilbert space $\cH$ such that its resolvent set contains a finite interval
 $(a,b)\subset\R$, $a<b$. Moreover, let $V$ be a non-negative (resp.~non-positive) bounded self-adjoint operator on $\cH$.
 
 If $\norm{V}<b-a$, then the interval $(a+\norm{V},b)$ (resp.~$(a,b-\norm{V})$) belongs to the resolvent set of the perturbed
 operator $A+V$.

 \begin{proof}
  For the sake of completeness, we reproduce the proof.

  Let $\norm{V}<b-a$ and assume that $V$ is non-negative. The case where $V$ is non-positive can be reduced to this case by
  considering $-(A+V)=-A-V$.
  
  Denote $\cH_-:=\Ran\EE_A\bigl((-\infty,a]\bigr)$ and $\cH_+:=\Ran\EE_A\bigl([b,\infty)\bigr)$, and denote by
  $A_\pm:=A|_{\cH_\pm}$ the parts of $A$ associated with $\cH_\pm$. Decompose the perturbation $V$ as
  \[
   V = V_\text{diag}+V_\text{off},
  \]
  where $V_\text{diag}=V_-\oplus V_+$ is the diagonal part of $V$ and $V_\text{off}$ is the off-diagonal part of $V$ with respect
  to the orthogonal decomposition $\cH=\cH_-\oplus\cH_+$.

  Since $V$ is non-negative, the diagonal part $V_\text{diag}$ of $V$ is non-negative as well, that is, $V_\pm\ge 0$. Thus, taking
  into account that $a+\norm{V}<b$, one has
  \[
   A_-+V_- \le a+\norm{V} < b \le A_++V_+.
  \]
  In particular, the interval $(a+\norm{V},b)$ belongs to the resolvent set of the operator
  $A+V_\text{diag}=(A_-+V_-)\oplus(A_++V_+)$, and the subspaces $\cH_-$ and $\cH_+$ are the spectral subspaces for
  $A+V_\text{diag}$ associated with the sets $(-\infty,a+\norm{V}]$ and $[b,\infty)$, respectively. Now, by~\cite[Theorem~2.1]{AL95}
  (see also~\cite[Theorem~8.1]{DK70}), the gap $(a+\norm{V},b)$ in the spectrum is preserved under the off-diagonal perturbation
  $V_\text{off}$, that is, the interval $(a+\norm{V},b)$ also belongs to the resolvent set of $A+V=(A+V_\text{diag})+V_\text{off}$.
 \end{proof}%
\end{proposition}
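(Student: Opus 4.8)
The plan is first to reduce to the non-negative case: if $V\le0$, then $-V\ge0$ and $-(A+V)=(-A)+(-V)$, so applying the non-negative result to $-A$ (whose resolvent set then contains $(-b,-a)$) and reflecting the resulting gap about $0$ recovers the interval $(a,b-\norm{V})$. I would therefore assume $V\ge0$ throughout. The essential point to appreciate is that the conclusion \emph{cannot} follow from the norm bound $\norm{V}<b-a$ alone: a direct Neumann-series treatment of $A+V-\lambda=(A-\lambda)+V$ for $\lambda\in(a+\norm{V},b)$ is doomed, because $\norm{(A-\lambda)^{-1}}=1/\dist(\lambda,\spec A)$ can be arbitrarily large as $\lambda\to b$, so $\norm{(A-\lambda)^{-1}V}<1$ is not available. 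The sign of $V$ must enter decisively.

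To bring in the sign, I would cut the space along the gap. Put $\cH_-:=\Ran\EE_A\bigl((-\infty,a]\bigr)$ and $\cH_+:=\Ran\EE_A\bigl([b,\infty)\bigr)$; since $(a,b)$ lies in the resolvent set of $A$ these are orthogonal and complementary, $\cH=\cH_-\oplus\cH_+$. Relative to this splitting I would write $V=V_\text{diag}+V_\text{off}$, with $V_\text{diag}=V_-\oplus V_+$ the two diagonal blocks and $V_\text{off}$ the off-diagonal remainder. The key elementary observation is that a compression of a non-negative operator is again non-negative, hence $V_\pm\ge0$, and moreover $\norm{V_-}\le\norm{V}$.

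With $A_\pm:=A|_{\cH_\pm}$ one has $A_-\le a$ and $A_+\ge b$ on the respective summands, so $A_-+V_-\le a+\norm{V_-}\le a+\norm{V}<b\le A_+\le A_++V_+$. Thus $A+V_\text{diag}=(A_-+V_-)\oplus(A_++V_+)$ is block-diagonal, with its lower block bounded above by $a+\norm{V}$ and its upper block bounded below by $b$; in particular $(a+\norm{V},b)$ lies in the resolvent set of $A+V_\text{diag}$, and $\cH_\pm$ are precisely the spectral subspaces of $A+V_\text{diag}$ for $(-\infty,a+\norm{V}]$ and $[b,\infty)$. This is the step where semidefiniteness does all the work: it is exactly what lets the gap shrink on the left by $\norm{V}$ while remaining open.

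It remains to re-attach $V_\text{off}$. Here I would invoke the fact that an off-diagonal self-adjoint perturbation cannot place spectrum inside the open gap of a block-diagonal operator, no matter how large its norm — it can only drive the two spectral components further apart — so the interval $(a+\norm{V},b)$ survives intact; this is precisely \cite[Theorem 2.1]{AL95} (equivalently \cite[Theorem 8.1]{DK70}). Applying it to $A+V=(A+V_\text{diag})+V_\text{off}$ yields $(a+\norm{V},b)$ in the resolvent set of $A+V$, as claimed. The only genuinely nontrivial ingredient is this gap-preservation theorem, which I would use as a black box; were one to insist on proving it from scratch it would be the main obstacle, whereas the compression inequality $V_\pm\ge0$ and the bookkeeping $\norm{V_-}\le\norm{V}$ are routine. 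Conceptually, the crux is the realization in the first paragraph that it is the sign of $V$, and not merely its norm, that carries the argument.
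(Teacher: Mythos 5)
Your proposal is correct and follows essentially the same route as the paper's proof: the reduction to $V\ge0$, the splitting of $\cH$ along the gap, the observation that the diagonal compressions $V_\pm$ remain non-negative so that the block-diagonal operator keeps the gap $(a+\norm{V},b)$, and the final appeal to the off-diagonal gap-preservation theorem of Adamjan--Langer (equivalently Davis--Kahan) are all exactly the steps taken there.
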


As a direct consequence of Proposition~\ref{prop:semidefinite}, semidefinite perturbations move the spectrum only in one direction.
More precisely, using the notation $\Delta+[0,\norm{V}]:=\{\lambda+t\mid \lambda\in\Delta\,,\ 0\le t\le\norm{V}\}$ for a Borel set
$\Delta\subset\R$, we have the following corollary to Proposition~\ref{prop:semidefinite}.

\begin{corollary}\label{cor:semidefinite}
 Let $A$ be a self-adjoint operator on the Hilbert space $\cH$, and let $V$ be a non-negative bounded self-adjoint operator on
 $\cH$. Then,
 \[
  \spec(A+V) \subset \spec(A) + [0,\norm{V}].
 \]
 
 \begin{proof}
  Let $\lambda\in\spec(A+V)$ be arbitrary. We have to show that $\spec(A)$ and the interval $[\lambda-\norm{V},\lambda]$ intersect.
  
  Assume the contrary, that is, $[\lambda-\norm{V},\lambda]\subset\rho(A)$. Since $\rho(A)$ is open, there is $\eps>0$ such that
  $(\lambda-\norm{V}-\eps,\lambda+\eps)\subset\rho(A)$. Proposition~\ref{prop:semidefinite} then implies that
  $(\lambda-\eps,\lambda+\eps)\subset\rho(A+V)$, which contradicts $\lambda\in\spec(A+V)$. Hence,
  $\spec(A)\cap[\lambda-\norm{V},\lambda]\neq\emptyset$, and the proof is complete.
 \end{proof}%
\end{corollary}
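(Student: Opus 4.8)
The plan is to argue by contrapositive, reducing the set inclusion to a local spectral statement at a single point and then invoking Proposition \ref{prop:semidefinite}. Unravelling the definition of $\spec(A)+[0,\norm{V}]$, a point $\lambda\in\R$ belongs to this set precisely when the closed interval $[\lambda-\norm{V},\lambda]$ meets $\spec(A)$. Hence it suffices to show that every $\lambda\in\spec(A+V)$ satisfies $\spec(A)\cap[\lambda-\norm{V},\lambda]\neq\emptyset$, and I would prove this by assuming the contrary.

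So suppose $\lambda\in\spec(A+V)$ but $[\lambda-\norm{V},\lambda]\subset\rho(A)$. The one technical step that requires attention is to inflate this closed interval, which lies in the resolvent set, into a slightly larger \emph{open} interval still contained in $\rho(A)$, so that the strict gap non-closing hypothesis of Proposition \ref{prop:semidefinite} can be met. Since $\rho(A)$ is open and $[\lambda-\norm{V},\lambda]$ is compact, there is some $\eps>0$ with $(\lambda-\norm{V}-\eps,\lambda+\eps)\subset\rho(A)$. Writing $a:=\lambda-\norm{V}-\eps$ and $b:=\lambda+\eps$, one has $b-a=\norm{V}+2\eps>\norm{V}$, so Proposition \ref{prop:semidefinite} applies to the non-negative perturbation $V$ and yields that $(a+\norm{V},b)=(\lambda-\eps,\lambda+\eps)$ belongs to $\rho(A+V)$. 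In particular $\lambda\in\rho(A+V)$, contradicting $\lambda\in\spec(A+V)$.

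This contradiction forces $\spec(A)\cap[\lambda-\norm{V},\lambda]\neq\emptyset$ for every $\lambda\in\spec(A+V)$, which is exactly the asserted inclusion. I do not expect a genuine obstacle here: the only delicate point is the passage from the closed interval to an enclosing open one, which is precisely what makes the strict inequality $\norm{V}<b-a$ in Proposition \ref{prop:semidefinite} usable; once that enlargement is in place the conclusion is immediate. A conceivable alternative would bypass the contrapositive and instead try to locate a point of $\spec(A)$ inside $[\lambda-\norm{V},\lambda]$ directly, for instance through a min-max or spectral-mapping argument, but this appears more cumbersome than the clean reduction to the one-sided spectral enclosure already furnished by Proposition \ref{prop:semidefinite}.
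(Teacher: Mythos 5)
Your argument is correct and coincides with the paper's own proof: both fix $\lambda\in\spec(A+V)$, assume $[\lambda-\norm{V},\lambda]\subset\rho(A)$, enlarge this to an open interval $(\lambda-\norm{V}-\eps,\lambda+\eps)\subset\rho(A)$ by openness of the resolvent set, and apply Proposition \ref{prop:semidefinite} to conclude $(\lambda-\eps,\lambda+\eps)\subset\rho(A+V)$, a contradiction. Your explicit identification of $a$, $b$ and the verification $b-a=\norm{V}+2\eps>\norm{V}$ only makes the paper's implicit step more transparent.
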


In the situation of Theorems~\ref{thm:favGeo} and~\ref{thm:generic}, it is easy to see from Corollary~\ref{cor:semidefinite} that
the spectrum of the perturbed operator $A+V$ is separated as in~\eqref{eq:intro:specPert} and~\eqref{eq:intro:defomega}, where
$\omega$ and $\Omega$ are non-empty and contained in one-sided neighbourhoods of $\sigma$ and $\Sigma$, respectively.

In the same way, for each $t\in[0,1]$ the spectrum of the operator $A+tV$ is separated into two disjoint components $\omega_t$ and
$\Omega_t$ defined analogously to $\omega$ and $\Omega$ above, respectively, that is,
\begin{equation}\label{eq:defomega_t}
 \omega_t = \spec(A+tV) \cap \bigl(\sigma+[0,t\norm{V}]\bigr)
\end{equation}
and
\begin{equation}\label{eq:defOmega_t}
 \Omega_t = \spec(A+tV) \cap \bigl(\Sigma+[0,t\norm{V}]\bigr).
\end{equation}

In this context, we need the following result for future reference.

\begin{lemma}[{cf.~\cite[Theorem 3.5]{AM13}}]\label{lem:cont}
 Let $A$ be as in Theorem~\ref{thm:generic}, and let $V$ be a non-negative bounded self-adjoint operator on $\cH$ satisfying
 $\norm{V}<d$. For $t\in[0,1]$ consider the spectral component $\omega_t\subset\spec(A+tV)$ as in~\eqref{eq:defomega_t}. Then, the
 path $[0,1]\ni t\mapsto\EE_{A+tV}(\omega_t)$ of spectral projections is continuous in norm.
  
 \begin{proof}
  It is easy to see that for $0\le s\le t\le 1$ the spectral components~\eqref{eq:defomega_t} and~\eqref{eq:defOmega_t} satisfy
  \[
   \dist(\omega_s,\Omega_t)\ge d-t\norm{V} \quad\text{ and }\quad \dist(\Omega_s,\omega_t)\ge d-t\norm{V}.
  \]
  Taking into account that $A+tV=(A+sV)+(t-s)V$, the symmetric $\sin\Theta$ theorem from~\cite[Proposition~2.3]{Seel14} (see also,
  e.g., the proof of~\cite[Theorem~3.5]{AM13}) then implies that
  \[
   \norm{\EE_{A+sV}(\omega_s)-\EE_{A+tV}(\omega_t)} \le \frac{\pi}{2}\frac{\abs{t-s}\norm{V}}{d-t\norm{V}}
   \quad\text{ for }\quad 0\le s\le t\le 1,
  \]
  which immediately proves the claim.
 \end{proof}%
\end{lemma}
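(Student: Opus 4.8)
The plan is to compare neighbouring points of the path by reinterpreting the shift as $A+tV=(A+sV)+(t-s)V$, i.e.\ as an additive perturbation of $A+sV$, and then to invoke a symmetric $\sin\Theta$ theorem. Concretely, it suffices to treat $0\le s\le t\le1$, the reverse ordering following by symmetry. By Corollary \ref{cor:semidefinite} together with the remarks following it, the operator $A+sV$ has its spectrum separated into the two components $\omega_s$ and $\Omega_s$, and likewise $A+tV$ into $\omega_t$ and $\Omega_t$; these furnish the two spectral decompositions to be compared, the perturbation connecting them being $(t-s)V$ of norm $(t-s)\norm{V}$.

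The quantitative heart of the argument is a lower bound on the two \emph{cross}-distances $\dist(\omega_s,\Omega_t)$ and $\dist(\Omega_s,\omega_t)$, since these are exactly the separations that drive the symmetric $\sin\Theta$ theorem. Here I would argue purely geometrically from the definitions \eqref{eq:defomega_t} and \eqref{eq:defOmega_t}: writing an arbitrary point of $\omega_s$ as $\mu+r$ with $\mu\in\sigma$ and $0\le r\le s\norm{V}$, and an arbitrary point of $\Omega_t$ as $\tau+r'$ with $\tau\in\Sigma$ and $0\le r'\le t\norm{V}$, the triangle inequality gives a distance at least $\abs{\mu-\tau}-\abs{r-r'}\ge d-t\norm{V}$, where I use $\dist(\sigma,\Sigma)=d$ and --- crucially --- that $s\le t$ forces $\abs{r-r'}\le t\norm{V}$. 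The identical computation with the two components interchanged yields $\dist(\Omega_s,\omega_t)\ge d-t\norm{V}$, and both bounds are strictly positive because $t\norm{V}\le\norm{V}<d$.

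With these cross-distances in hand, I would apply the symmetric $\sin\Theta$ theorem (for instance \cite[Proposition 2.3]{Seel14}) to $A+sV$, $A+tV$, and the perturbation $(t-s)V$, obtaining
\[
 \norm{\EE_{A+sV}(\omega_s)-\EE_{A+tV}(\omega_t)}\le\frac{\pi}{2}\,\frac{(t-s)\norm{V}}{d-t\norm{V}}\,.
\]
Continuity then follows at once: since $d-t\norm{V}\ge d-\norm{V}>0$ uniformly in $t\in[0,1]$, the right-hand side is bounded by $\tfrac{\pi}{2}\,\tfrac{\norm{V}}{d-\norm{V}}\,(t-s)$, so the path is in fact Lipschitz and in particular norm-continuous.

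I expect the main obstacle to be the cross-distance estimate --- not for any deep reason, but because one must be careful that $\omega_s$ and $\Omega_t$ lie in right-neighbourhoods of $\sigma$ and $\Sigma$ of \emph{different} widths $s\norm{V}$ and $t\norm{V}$; the estimate goes through precisely because the wider of the two, $t\norm{V}$, dominates the difference of the two shifts. Everything else is a mechanical combination of the already-established spectral separation with the symmetric $\sin\Theta$ theorem, and no non-negativity of the connecting perturbation is actually needed beyond what secured the structure of $\omega_t$ and $\Omega_t$ in the first place.
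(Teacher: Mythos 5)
Your proposal is correct and follows essentially the same route as the paper: rewrite $A+tV=(A+sV)+(t-s)V$, establish the cross-distance bounds $\dist(\omega_s,\Omega_t)\ge d-t\norm{V}$ and $\dist(\Omega_s,\omega_t)\ge d-t\norm{V}$, and apply the symmetric $\sin\Theta$ theorem of \cite[Proposition 2.3]{Seel14} to obtain the Lipschitz estimate $\norm{\EE_{A+sV}(\omega_s)-\EE_{A+tV}(\omega_t)}\le\frac{\pi}{2}\frac{(t-s)\norm{V}}{d-t\norm{V}}$. Your explicit verification of the cross-distance inequality (which the paper leaves as ``easy to see'') is accurate, including the key point that $s\le t$ makes $t\norm{V}$ the controlling width.
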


\subsection{The $\sin2\Theta$ theorem for semidefinite perturbations}\label{sec:sin2Theta}

The following result provides a variant of the Davis-Kahan $\sin2\Theta$ theorem (see, e.g.,~\cite{DK70} and~\cite{Seel14}) for
semidefinite perturbations. This is the core of the proofs of both Theorems~\ref{thm:favGeo} and~\ref{thm:generic}.

\begin{proposition}[cf.~{\cite[Theorem~1]{Seel14}}]\label{prop:sin2Theta}
 Let $A$ be as in Theorem~\ref{thm:generic}. Moreover, let $V$ be a bounded non-negative operator on $\cH$ and $Q$ be an orthogonal
 projection in $\cH$ onto a reducing subspace for $A+V$. Then, the operator angle $\Theta=\arcsin\abs{\EE_A(\sigma)-Q}$ associated
 with $\EE_A(\sigma)$ and $Q$ satisfies
 \begin{equation}\label{eq:sin2Theta}
  \norm{\sin2\Theta} \le \frac{\pi}{2}\frac{\norm{V}}{d}.
 \end{equation}
 If, in addition, $\conv(\sigma)\cap\Sigma=\emptyset$ or $\sigma\cap\conv(\Sigma)=\emptyset$, then the constant $\pi/2$
 in~\eqref{eq:sin2Theta} can be replaced by $1$.

 \begin{proof}
  Recall from the proof of~\cite[Theorem~1]{Seel14} that
  \[
   \norm{\sin2\Theta} \le \frac{\pi}{2}\frac{\norm{V-KVK}}{d},
  \]
  where $K=Q-Q^\perp=2Q-I_\cH$ is self-adjoint and unitary. Also recall that the constant $\pi/2$ in this estimate can be replaced
  by $1$ if $\conv(\sigma)\cap\Sigma=\emptyset$ or $\sigma\cap\conv(\Sigma)=\emptyset$, see, e.g.,~\cite[Remark~2.5]{Seel14}. It
  only remains to show that $\norm{V-KVK}\le\norm{V}$.

  Indeed, since $V$ is non-negative, the operator $KVK$ is non-negative as well and, thus,
  \[
   -KVK \le V-KVK \le V.
  \]
  Hence, $\norm{V-KVK}\le\max\{\norm{V},\norm{KVK}\}=\norm{V}$, where we have taken into account that $K$ is unitary. This
  completes the proof.
 \end{proof}%
\end{proposition}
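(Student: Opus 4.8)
The plan is to reduce the claim to the general Davis--Kahan $\sin2\Theta$ estimate from \cite[Theorem 1]{Seel14} and then to exploit the non-negativity of $V$ in a single elementary step. The point is that the proof in \cite{Seel14} does not deliver the bound directly in terms of $\norm{V}$, but rather in terms of the finer quantity $\norm{V-KVK}$, where $K:=Q-Q^\perp=2Q-I_\cH$ is the self-adjoint unitary (symmetry) associated with the projection $Q$. In other words, one has at one's disposal the estimate
\[
 \norm{\sin2\Theta} \le \frac{\pi}{2}\frac{\norm{V-KVK}}{d}\,,
\]
and, under the additional convexity hypothesis $\conv(\sigma)\cap\Sigma=\emptyset$ or $\sigma\cap\conv(\Sigma)=\emptyset$, the constant $\pi/2$ improves to $1$ (see \cite[Remark 2.5]{Seel14}). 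So the first step is simply to invoke this estimate verbatim; everything then hinges on establishing $\norm{V-KVK}\le\norm{V}$.

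For this final inequality I would use only the semidefiniteness of $V$. Since $V\ge0$ and $K$ is unitary, the conjugate $KVK$ is again non-negative, so the self-adjoint operator $V-KVK$ is sandwiched between two non-negative operators,
\[
 -KVK \le V-KVK \le V\,.
\]
Testing the quadratic form against an arbitrary unit vector $x$ then gives $-\norm{KVK}\le\langle(V-KVK)x,x\rangle\le\norm{V}$, whence $\abs{\langle(V-KVK)x,x\rangle}\le\max\{\norm{V},\norm{KVK}\}$. Taking the supremum over unit vectors and using that $V-KVK$ is self-adjoint yields $\norm{V-KVK}\le\max\{\norm{V},\norm{KVK}\}=\norm{V}$, the last equality because conjugation by the unitary $K$ leaves the operator norm unchanged. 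Combining this with the imported estimate immediately gives both the bound \eqref{eq:sin2Theta} and its sharpened version under the convexity assumption.

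I expect the only genuine subtlety to be one of bookkeeping rather than a real obstacle: one must check that the $\norm{V-KVK}$-form of the $\sin2\Theta$ estimate, together with the improved constant under the convexity hypothesis, is precisely what \cite{Seel14} makes available, so that no stray factor of $2$ is introduced. Once that is pinned down, the non-negativity argument $\norm{V-KVK}\le\norm{V}$ is entirely elementary, and it is exactly this single bound that accounts for the absence of the factor $2$ present in the general (not necessarily semidefinite) $\sin2\Theta$ theorem.
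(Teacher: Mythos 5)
Your proposal is correct and follows essentially the same route as the paper: import the bound $\norm{\sin2\Theta}\le\tfrac{\pi}{2}\norm{V-KVK}/d$ (with constant $1$ under the convexity hypothesis) from the proof of \cite[Theorem 1]{Seel14} and then use $V\ge0$ to get $-KVK\le V-KVK\le V$ and hence $\norm{V-KVK}\le\max\{\norm{V},\norm{KVK}\}=\norm{V}$. The quadratic-form justification of the sandwich step is a harmless elaboration of what the paper leaves implicit.
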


We are now able to prove Theorem~\ref{thm:favGeo} by taking $Q=\EE_{A+V}(\omega)$ in Proposition~\ref{prop:sin2Theta}:

\begin{proof}[Proof of Theorem~\ref{thm:favGeo}]
 Recall that the maximal angle $\theta$ in~\eqref{eq:defmaxangle} agrees with the norm of the operator angle
 $\Theta=\arcsin\abs{\EE_A(\sigma)-\EE_{A+V}(\omega)}$ associated with $\EE_A(\sigma)$ and $\EE_{A+V}(\omega)$, cf.,
 e.g.,~\cite[Eq.~(2.6)]{Seel14}. Thus, it follows from Proposition~\ref{prop:sin2Theta} that
 \begin{equation}\label{eq:sin2tT}
  \sin2\theta \le \norm{\sin2\Theta} \le \frac{\norm{V}}{d}.
 \end{equation}
 Moreover, combining~\eqref{eq:sin2tT} and Lemma~\ref{lem:cont}, the same continuity argument as in the proof
 of~\cite[Lemma~2.7]{Seel14} shows that $\theta\le\pi/4$. Inequality~\eqref{eq:sin2tT} then agrees with estimate~\eqref{eq:favGeo}.

 The sharpness of estimate~\eqref{eq:favGeo} can be seen from the following example of $2\times2$ matrices
 (cf.~\cite[Remark~2.9]{Seel14}): For arbitrary $0\le v<1$ consider
 \[
  A:=\begin{pmatrix} -\frac{1}{2} & 0\\ 0 & \frac{1}{2} \end{pmatrix} \quad\text{and }\quad
  V:=\begin{pmatrix} \frac{v(v+1)}{2} & \frac{v\sqrt{1-v^2}}{2}\\[0.1cm] \frac{v\sqrt{1-v^2}}{2} & \frac{v(1-v)}{2} \end{pmatrix}
 \]
 with $\sigma:=\{-1/2\}$, $\Sigma:=\{1/2\}$, and $d:=\dist(\sigma,\Sigma)=1$.

 It is easy to verify that $\spec(V)=\{0,v\}$, hence $V\ge0$ and $\norm{V}=v$, and that the spectrum of $A+V$ is given by
 $\spec(A+V)=\bigl\{(v\pm\sqrt{1-v^2})/2\bigr\}$.

 Denote $\omega:=\bigl\{(v-\sqrt{1-v^2})/2\bigr\}\subset[-1/2,-1/2+v]$ and $\theta:=\arcsin(v)/2$. Using the identities
 \[
  \frac{1-\sqrt{1-v^2}}{v} = \tan\theta = \frac{v}{1+\sqrt{1-v^2}} \quad\text{ for }\quad 0<v<1,
 \]
 it is then straightforward to show that
 \[
  (A+V)\begin{pmatrix} \cos\theta\\ -\sin\theta \end{pmatrix} =
  \frac{v-\sqrt{1-v^2}}{2}\begin{pmatrix} \cos\theta\\ -\sin\theta\end{pmatrix},
 \]
 and, therefore,
 \[
  \arcsin\bigl(\norm{\EE_A(\sigma)-\EE_{A+V}(\omega)}\bigr) = \theta = \frac{1}{2}\arcsin\Bigl(\frac{\norm{V}}{d}\Bigr).
 \]
 Hence, estimate~\eqref{eq:favGeo} is sharp, which completes the proof.
\end{proof}%

Analogously to the proof of Theorem~\ref{thm:favGeo}, we obtain the following corollary to Proposition~\ref{prop:sin2Theta} in the
situation where no additional assumptions on the spectrum of $A$ are imposed. This result plays a crucial role in the proof of
Theorem~\ref{thm:generic}, see Section~\ref{sec:generic} below.

\begin{corollary}[cf.~{\cite[Corollary~2]{Seel14}}]\label{cor:generic}
 In the situation of Theorem~\ref{thm:generic} one has
 \[
  \theta \le \frac{1}{2}\arcsin\Bigl(\frac{\pi}{2}\frac{\norm{V}}{d}\Bigr) \le \frac{\pi}{4}
  \quad\text{ whenever }\quad \norm{V}\le\frac{2d}{\pi}.
 \]
\end{corollary}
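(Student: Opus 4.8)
The plan is to mirror the proof of Theorem \ref{thm:favGeo}, replacing the sharpened constant $1$ by the generic constant $\pi/2$ furnished by Proposition \ref{prop:sin2Theta}. First I would apply Proposition \ref{prop:sin2Theta} with $Q=\EE_{A+V}(\omega)$, so that $\Theta=\arcsin\abs{\EE_A(\sigma)-\EE_{A+V}(\omega)}$ is the operator angle associated with $\EE_A(\sigma)$ and $\EE_{A+V}(\omega)$ with $\norm{\Theta}=\theta$. Since $\sin2\theta\le\norm{\sin2\Theta}$, estimate \eqref{eq:sin2Theta} yields
\[
 \sin2\theta \le \frac{\pi}{2}\frac{\norm{V}}{d}\,.
\]
Under the hypothesis $\norm{V}\le 2d/\pi$ the right-hand side is at most $1$, so that $\arcsin\bigl(\frac{\pi}{2}\frac{\norm{V}}{d}\bigr)$ is well defined and does not exceed $\pi/2$.

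The difficulty is that $\sin$ is not injective on $[0,\pi]$, so the bound on $\sin2\theta$ cannot be inverted directly; I first need to ensure that $2\theta$ lies in the range $[0,\pi/2]$ on which $\sin$ is monotone. To this end I would run the same continuity argument as in the proof of Theorem \ref{thm:favGeo} (see also \cite[Lemma 2.7]{Seel14}): consider the family $A+tV$, $t\in[0,1]$, with spectral components $\omega_t$ as in \eqref{eq:defomega_t}, and set $\theta_t=\arcsin\norm{\EE_A(\sigma)-\EE_{A+tV}(\omega_t)}$. Applying Proposition \ref{prop:sin2Theta} to the perturbation $tV$ gives $\sin2\theta_t\le\frac{\pi}{2}\frac{t\norm{V}}{d}\le\frac{\pi}{2}\frac{\norm{V}}{d}\le 1$ for every $t$, while Lemma \ref{lem:cont} guarantees that $t\mapsto\theta_t$ is continuous with $\theta_0=0$.

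For $\norm{V}<2d/\pi$ the strict bound $\sin2\theta_t<1$ holds for all $t\in[0,1]$, so $2\theta_t$ never reaches $\pi/2$; by continuity and $\theta_0=0$ this forces $\theta_t<\pi/4$ throughout, in particular $\theta=\theta_1<\pi/4$. In the boundary case $\norm{V}=2d/\pi$ the same reasoning applies for $t<1$, where $\sin2\theta_t<1$, and $\theta\le\pi/4$ then follows upon letting $t\to1^-$. Once $2\theta\le\pi/2$ is secured, the monotonicity of $\sin$ on $[0,\pi/2]$ allows the displayed inequality to be inverted into $2\theta\le\arcsin\bigl(\frac{\pi}{2}\frac{\norm{V}}{d}\bigr)$, and dividing by $2$ yields the asserted estimate, with the bound $\le\pi/4$ coming from $\frac{\pi}{2}\frac{\norm{V}}{d}\le1$. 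I expect the continuity/homotopy step to be the only genuine obstacle: without it the non-injectivity of $\sin$ leaves open the possibility $\theta>\pi/4$, and it is precisely the connectedness of the perturbation path together with Lemma \ref{lem:cont} that rules this out.
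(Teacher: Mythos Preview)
Your proposal is correct and matches the paper's approach exactly: the paper states this corollary as following ``analogously to the proof of Theorem \ref{thm:favGeo}'', i.e., apply Proposition \ref{prop:sin2Theta} with the generic constant $\pi/2$ to get $\sin2\theta\le\frac{\pi}{2}\frac{\norm{V}}{d}$, then invoke the continuity argument via Lemma \ref{lem:cont} and \cite[Lemma 2.7]{Seel14} to pin down $\theta\le\pi/4$ and invert the sine. You have in fact spelled out the homotopy/continuity step in more detail than the paper does.
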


It is interesting to note that also the alternative, direct proof of the $\sin2\theta$ estimate from~\cite[Proposition~3.3]{Seel14},
which is related to Proposition~\ref{prop:sin2Theta} by $\sin2\theta\le\norm{\sin2\Theta}$ (cf.~equation~\eqref{eq:sin2tT} above),
can be adapted to the case of semidefinite perturbations. This is discussed in Appendix~\ref{app:sin2theta} below.

\subsection{Proof of Theorem \ref{thm:generic}}\label{sec:generic}
For $t\in[0,1]$ let $P_t:=\EE_{A+tV}(\omega_t)$ denote the spectral projection for $A+tV$ associated with the spectral component
$\omega_t$ in~\eqref{eq:defomega_t}. Clearly, one has $P_0=\EE_A(\sigma)$ and $P_1=\EE_{A+V}(\omega)$.

Let $0=t_0\le\dots\le t_n=1$, $n\in\N$, be a finite partition of the interval $[0,1]$. As in~\cite{AM13},~\cite{Seel18},
and~\cite{Seel16}, the triangle inequality for the maximal angle (see, e.g.,~\cite[Corollary~4]{Brown93}) yields
\begin{equation}\label{eq:projTriangle}
 \arcsin\bigl(\norm{\EE_A(\sigma)-\EE_{A+V}(\omega)}\bigr) \le \sum_{j=0}^{n-1} \arcsin\bigl(\norm{P_{t_j}-P_{t_{j+1}}}\bigr).
\end{equation}
Moreover, one has $\dist(\omega_{t_j},\Omega_{t_j})\ge d-t_j\norm{V}$ and, therefore,
\[
 \frac{(t_{j+1}-t_j)\norm{V}}{\dist(\omega_{t_j},\Omega_{t_j})} \le \frac{(t_{j+1}-t_j)\norm{V}}{d-t_j\norm{V}} =: \lambda_j < 1,
 \quad j=0,\dots,n-1.
\]
Hence, considering $A+t_{j+1}V=(A+t_jV)+(t_{j+1}-t_j)V$ as a perturbation of $A+t_jV$ with $(t_{j+1}-t_j)V\ge0$ and taking into
account that
\[
 \omega_{t_{j+1}}=\spec(A+t_{j+1}V)\cap \bigl(\omega_{t_j}+[0,(t_{j+1}-t_j)\norm{V}]\bigr),
\]
it follows from Corollary~\ref{cor:generic} that
\begin{equation}\label{eq:localSin2theta}
 \arcsin\bigl(\norm{P_{t_j}-P_{t_{j+1}}}\bigr) \le \frac{1}{2}\arcsin\Bigl(\frac{\pi\lambda_j}{2}\Bigr) \quad\text{ whenever }\quad
 \lambda_j\le\frac{2}{\pi}.
\end{equation}
Combining inequalities~\eqref{eq:projTriangle} and~\eqref{eq:localSin2theta} suggests to estimate the maximal angle between the
subspaces $\Ran\EE_A(\sigma)$ and $\Ran\EE_{A+V}(\omega)$ as
\begin{equation}\label{eq:estimate}
 \arcsin\bigl(\norm{\EE_A(\sigma)-\EE_{A+V}(\omega)}\bigr) \le
 \frac{1}{2}\sum_{j=0}^{n-1} \arcsin\Bigl(\frac{\pi\lambda_j}{2}\Bigr),
\end{equation}
provided that $\lambda_j\le 2/\pi$. The task then is to minimize the right-hand side of~\eqref{eq:estimate} over all corresponding
choices of partitions of the interval $[0,1]$.

In this context, the consideration of partitions of the interval $[0,1]$ with arbitrarily small mesh size allows one to obtain an
analogue to the bounds in~\cite[Theorems~3.2 and~3.3]{MS15} in the current case of semidefinite perturbations:
\begin{remark}[{cf.~\cite[Section 2]{Seel18} and~\cite[Remark~2.1]{Seel16}}]\label{rem:MS}
 If the mesh size of the partition of the interval $[0,1]$ is sufficiently small, then the Riemann sum
 \[
  \sum_{j=0}^{n-1}\lambda_j = \sum_{j=0}^{n-1} \frac{(t_{j+1}-t_j)\norm{V}}{d-t_j\norm{V}}
 \]
 is close to the integral $\int_0^1 \frac{\norm{V}}{d-t\norm{V}}\,\dd t$. Since at the same time each $\lambda_j$ is small and
 $\arcsin(x)/x\to 1$ as $x\to 0$, we conclude from~\eqref{eq:estimate} that
 \[
  \arcsin\bigl(\norm{\EE_A(\sigma)-\EE_{A+V}(\omega)}\bigr) \le \frac{\pi}{4} \int_0^1 \frac{\norm{V}}{d-t\norm{V}}\,\dd t
  = \frac{\pi}{4}\log\frac{d}{d-\norm{V}}.
 \]
 Here, the right-hand side of the latter inequality is strictly less than $\pi/2$ whenever
 $\norm{V}/d < 2\sinh(1)/\exp(1)=0{.}86466\ldots<c_\mathrm{crit-sem}$.
\end{remark}

Clearly, one has $(t_{j+1}-t_j)\norm{V}/d=(1-t_j\norm{V}/d)\lambda_j$ by definition of $\lambda_j$, which can equivalently be
rewritten as
\[
 1-t_{j+1}\frac{\norm{V}}{d} = \Bigl(1-t_j\frac{\norm{V}}{d}\Bigr)(1-\lambda_j),\quad j=0,\dots,n-1.
\]
Since $t_0=0$ and $t_n=1$, this implies that
\begin{equation}\label{eq:partRepr}
 1-\frac{\norm{V}}{d} = \prod_{j=0}^{n-1} (1-\lambda_j).
\end{equation}
The right-hand side of~\eqref{eq:estimate} may therefore equivalently be minimized over all choices of $n\in\N$ and parameters
$\lambda_j\in[0,2/\pi]$, $j=0,\dots,n-1$, satisfying~\eqref{eq:partRepr}. It turns out that this optimization problem is in fact
just the same as the one in~\cite{Seel18}:

\begin{proof}[Proof of Theorem~\ref{thm:generic}]
 Recall from~\cite{Seel18} that the function $N$ in~\eqref{eq:boundFunc} is given by
 \[
  N(x)=\inf\biggl\{ \frac{1}{2}\sum_{j=0}^{n-1} \arcsin(\pi\lambda_j) \biggm| n\in\N,\, 0\le\lambda_j\le\frac{1}{\pi},\,
  \prod_{j=0}^{n-1}(1-2\lambda_j)=1-2x\biggr\}
 \]
 for $0\le x\le c_\text{crit}=\bigl(1-(1-\sqrt{3}/\pi)^3\bigr)/2$. Hence, replacing $2\lambda_j$ with $\lambda_j$, one
 obviously has
 \[
  N\Bigl(\frac{x}{2}\Bigr)=\inf\biggl\{ \frac{1}{2}\sum_{j=0}^{n-1} \arcsin\Bigl(\frac{\pi\lambda_j}{2}\Bigr) \biggm| n\in\N,\,
  0\le\lambda_j\le\frac{2}{\pi},\, \prod_{j=0}^{n-1}(1-\lambda_j)=1-x\biggr\}
 \]
 for $0\le x\le 2c_\text{crit}=c_\text{crit-sem}$. Taking into account~\eqref{eq:estimate} and~\eqref{eq:partRepr}, this
 proves~\eqref{eq:boundGeneric} and, thus, the claim.
\end{proof}%

\appendix

\section{The $\sin2\theta$ estimate}\label{app:sin2theta}
The aim of this section is to show how the direct proof of~\cite[Proposition~3.3]{Seel14} can be adapted to obtain the following
variant of the $\sin2\theta$ estimate for semidefinite perturbations.

\begin{proposition}[cf.~{\cite[Proposition~3.3]{Seel14}}]\label{prop:sin2theta}
 Let $A$, $V$, and $Q$ be as in Proposition~\ref{prop:sin2Theta}. Then,
 \begin{equation}\label{eq:sin2theta}
  \sin2\theta \le \frac{\pi}{2}\frac{\norm{V}}{d},
 \end{equation}
 where $\theta:=\arcsin\bigl(\norm{\EE_A(\sigma)-Q}\bigr)$ is the maximal angle associated with the subspaces $\Ran\EE_A(\sigma)$
 and $\Ran Q$.

 If, in addition, $\conv(\sigma)\cap\Sigma=\emptyset$ or $\sigma\cap\conv(\Sigma)=\emptyset$, then the constant $\pi/2$
 in~\eqref{eq:sin2theta} can be replaced by $1$.
\end{proposition}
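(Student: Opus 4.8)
The plan is to transfer the direct argument behind \cite[Proposition 3.3]{Seel14}, isolating the single place where the semidefiniteness of $V$ enters. The starting observation is that since $Q$ reduces $A+V$, one has $[A,Q]=[Q,V]$ as a bounded operator, so the off-diagonal block of $A+V$ with respect to $\cH=\Ran Q\oplus\Ran Q^\perp$ vanishes, giving
\[
 Q^\perp A Q = Q^\perp[A,Q] = Q^\perp[Q,V] = -Q^\perp V Q\,.
\]
This converts the off-diagonal part of the \emph{unperturbed} operator $A$ into that of the perturbation $V$, and the proof then splits into two independent ingredients: a purely spectral estimate bounding $\sin2\theta$ in terms of $\norm{Q^\perp A Q}$ and the gap $d$, and a bound on $\norm{Q^\perp V Q}$ exploiting $V\ge0$.

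For the spectral estimate I would follow \cite{Seel14} (cf.\ \cite{DK70}). The mechanism is already visible on a single vector: writing a unit vector $u\in\Ran Q$ (nearly) realizing the maximal angle as $u=\cos\theta\,p+\sin\theta\,p^\perp$ with $p\in\Ran\EE_A(\sigma)$, $p^\perp\in\Ran\EE_A(\Sigma)$, and pairing $Au$ against the conjugate principal vector $v=-\sin\theta\,p+\cos\theta\,p^\perp\in\Ran Q^\perp$, the cross terms drop out because $\EE_A(\sigma)$ reduces $A$, leaving $\langle Au,v\rangle=\tfrac12\sin2\theta\,(\langle Ap^\perp,p^\perp\rangle-\langle Ap,p\rangle)$. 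When $\spec(A)$ splits into two one-sided intervals the expectation difference is at least $d$, so that $\tfrac{d}{2}\sin2\theta\le\abs{\langle Au,v\rangle}\le\norm{Q^\perp A Q}$ at once. To obtain the sharp constants in general, namely $\sin2\theta\le\tfrac{\pi}{d}\norm{Q^\perp A Q}$, improving to $\tfrac{2}{d}\norm{Q^\perp A Q}$ exactly when $\conv(\sigma)\cap\Sigma=\emptyset$ or $\sigma\cap\conv(\Sigma)=\emptyset$, one cannot rely on a single vector (the expectation difference may be smaller than $d$ when $\Sigma$ straddles $\conv(\sigma)$); instead I would pass to a sign-type function of $A$ and invoke the integral (operator-function) representation of \cite{Seel14}, whose kernel has norm $\pi/2$, resp.\ $1$ under the convex-hull condition. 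I expect this sharp spectral step to be the main obstacle; crucially it uses only the separation of $\spec(A)$ and the angle $\theta$, is insensitive to the sign of $V$, and therefore transfers verbatim to the present setting.

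The semidefinite ingredient is the only genuinely new point. Since $V\ge0$ and $K:=2Q-I_\cH=Q-Q^\perp$ is self-adjoint and unitary, the operator $KVK$ is non-negative, whence $-KVK\le V-KVK\le V$ and $\norm{V-KVK}\le\max\{\norm{V},\norm{KVK}\}=\norm{V}$, exactly as in the proof of Proposition \ref{prop:sin2Theta}. As $V-KVK$ is twice the off-diagonal part of $V$, i.e.\ $\norm{V-KVK}=2\norm{Q^\perp V Q}$, this is precisely the content of Lemma \ref{lem:semidefinite} and yields the clean bound $\norm{Q^\perp V Q}\le\tfrac12\norm{V}$. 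Combining the three displays gives
\[
 \sin2\theta \le \frac{\pi}{d}\norm{Q^\perp A Q} = \frac{\pi}{d}\norm{Q^\perp V Q} \le \frac{\pi}{2}\frac{\norm{V}}{d}\,,
\]
and the factor $\pi$ is replaced by $2$ under the convex-hull hypothesis, producing the improved constant $1$. Thus the entire semidefinite gain is carried by the elementary factor $\tfrac12$, while the analytic difficulty remains concentrated in the inherited sharp spectral estimate.
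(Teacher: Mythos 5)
Your proposal is correct and follows essentially the same route as the paper: both quote the intermediate estimate from the proof of \cite[Proposition 3.3]{Seel14} --- your bound $\sin2\theta\le\frac{\pi}{d}\norm{Q^\perp VQ}$ is the same quantity as the paper's $\frac{\pi}{d}\norm{\EE_A(\Sigma)U^*VU\EE_A(\sigma)}$ because $U\EE_A(\sigma)U^*=Q$ --- and then both gain the factor $\frac12$ on the off-diagonal block of the non-negative $V$ via Lemma \ref{lem:semidefinite}. The only cosmetic difference is that you pass through the identity $Q^\perp AQ=-Q^\perp VQ$ and work in the decomposition $\cH=\Ran Q\oplus\Ran Q^\perp$, whereas the paper conjugates $V$ by $U$ into the decomposition $\cH=\Ran\EE_A(\sigma)\oplus\Ran\EE_A(\Sigma)$.
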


The key to obtain this variant from the proof of~\cite[Proposition~3.3]{Seel14} is the following observation for semidefinite
bounded operators, which might also be of independent interest.

\begin{lemma}\label{lem:semidefinite}
 Let $V$ be a non-negative bounded self-adjoint operator on $\cH$ given by the $2\times2$ block operator matrix
 \[
  V = \begin{pmatrix} V_0 & W\\ W^* & V_1 \end{pmatrix}
 \]
 with respect to an orthogonal decomposition $\cH=\cH_0\oplus\cH_1$. Then, one has
 \begin{equation}\label{eq:help}
  2\norm{W} \le \norm{V} \le 2\max\{\norm{V_0},\norm{V_1}\}.
 \end{equation}

 \begin{proof}
  For arbitrary normalized vectors $f\in\cH_0$ and $g\in\cH_1$ define the Hermitian $2\times2$ scalar matrix
  \[
   T_{f,g}:= \begin{pmatrix}
              \langle f,V_0f\rangle & \langle f,Wg\rangle\\
              \langle g,W^*f\rangle & \langle g,V_1g\rangle
             \end{pmatrix}.
  \]
  This matrix is again non-negative  and one has
  \[
   \sup_{\norm{f}=1=\norm{g}} \norm{T_{f,g}} = \norm{V},
  \]
  which can be seen from the identity
  \[
   \begin{pmatrix} \overline{\alpha} & \overline{\beta} \end{pmatrix} T_{f,g} \begin{pmatrix} \alpha\\ \beta \end{pmatrix} =
   \langle x,Vx\rangle
  \]
  for arbitary scalars $\alpha,\beta$ and $x:=\alpha f\oplus \beta g\in\cH$, see, e.g.,~\cite[Theorem~1.1.8]{Tre08}.

  It now follows by straightforward elementary calculations for $2\times 2$ matrices that
  \[
   2\abs{\langle f,Wg\rangle} \le \norm{T_{f,g}} \le 2\max\{\langle f,V_0f\rangle,\langle g,V_1g\rangle\}
  \]
  and, therefore,
  \[
   2\abs{\langle f,Wg\rangle} \le \norm{V} \quad\text{ as well as }\quad \norm{T_{f,g}} \le 2\max\{\norm{V_0},\norm{V_1}\}.
  \]  
  Taking the supremum over all normalized $f$ and $g$ proves~\eqref{eq:help}.
 \end{proof}%
\end{lemma}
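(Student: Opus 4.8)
The plan is to reduce this statement about the block operator $V$ on $\cH=\cH_0\oplus\cH_1$ to an elementary fact about non-negative Hermitian $2\times2$ scalar matrices, by compressing $V$ to two-dimensional subspaces. For normalized $f\in\cH_0$ and $g\in\cH_1$ (which are automatically orthogonal), I form the matrix
\[
 T_{f,g} = \begin{pmatrix} \langle f,V_0f\rangle & \langle f,Wg\rangle \\ \langle g,W^*f\rangle & \langle g,V_1g\rangle \end{pmatrix},
\]
which is the matrix of the compression of $V$ to $\mathrm{span}\{f,g\}$ and therefore inherits non-negativity from $V$. The crucial intermediate claim is the identity $\sup_{\norm{f}=\norm{g}=1}\norm{T_{f,g}}=\norm{V}$.

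First I would establish this supremum identity. One inequality is immediate, since every compression of $V$ has operator norm at most $\norm{V}$. For the reverse inequality I would use that $\norm{V}=\sup_{\norm{x}=1}\langle x,Vx\rangle$ because $V\ge0$, and then, for a given unit vector $x=x_0\oplus x_1$, write $x_0=\alpha f$ and $x_1=\beta g$ with $\norm{f}=\norm{g}=1$ and $\abs{\alpha}^2+\abs{\beta}^2=1$; a direct expansion shows that $\langle x,Vx\rangle$ equals the quadratic form of $T_{f,g}$ evaluated at $(\alpha,\beta)$, which is bounded by $\norm{T_{f,g}}$. The degenerate cases $x_0=0$ or $x_1=0$ are absorbed by choosing $f$ or $g$ arbitrarily.

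Next I would prove the two pointwise inequalities $2\abs{\langle f,Wg\rangle}\le\norm{T_{f,g}}$ and $\norm{T_{f,g}}\le 2\max\{\langle f,V_0f\rangle,\langle g,V_1g\rangle\}$. Both follow from the explicit formula for the larger eigenvalue of a Hermitian $2\times2$ matrix together with the determinant constraint $\abs{\langle f,Wg\rangle}^2\le\langle f,V_0f\rangle\langle g,V_1g\rangle$ coming from $T_{f,g}\ge0$: the upper bound uses this to dominate the square-root term, while the lower bound combines it with AM--GM on the diagonal entries. Finally I would pass to the suprema over $f$ and $g$, using $\norm{W}=\sup_{f,g}\abs{\langle f,Wg\rangle}$ to obtain $2\norm{W}\le\norm{V}$ from the first inequality, and the supremum identity to obtain $\norm{V}\le2\max\{\norm{V_0},\norm{V_1}\}$ from the second.

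The routine $2\times2$ eigenvalue computations are not the obstacle; the step that must be handled with care is the reduction itself, namely the verification that the operator norm of the possibly infinite-dimensional block matrix $V$ is recovered \emph{exactly} as the supremum of the norms of its two-dimensional compressions $T_{f,g}$. This is precisely where the non-negativity of $V$ is indispensable, both to reduce $\norm{V}$ to a supremum of expectation values and to guarantee that each $T_{f,g}$ is non-negative, so that its operator norm coincides with its largest eigenvalue and the explicit formulas apply.
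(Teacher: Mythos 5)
Your proposal is correct and follows essentially the same route as the paper's proof: the same compression matrices $T_{f,g}$, the same supremum identity $\sup\norm{T_{f,g}}=\norm{V}$, and the same two pointwise $2\times2$ inequalities obtained from non-negativity of $T_{f,g}$. You merely spell out the eigenvalue and degenerate-vector details that the paper leaves as ``straightforward elementary calculations'' or delegates to a reference, and these details check out.
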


\begin{remark}
 Lemma~\ref{lem:semidefinite} is optimal in the sense that $\norm{W}=\norm{V}/2$ is possible and, at the same time,
 $\max\{\norm{V_0},\norm{V_1}\}$ may take any value between $\norm{V}/2$ and $\norm{V}$. This can be seen from the following
 example:

 Let $x>0$, $y\in[x/2,x]$, and consider the entries
 \[
  V_0 := \begin{pmatrix} y & 0\\ 0 & x/2 \end{pmatrix},\quad V_1 := \begin{pmatrix} x/2 & 0\\ 0 & y \end{pmatrix},\quad
  W := \begin{pmatrix} 0 & 0\\ x/2 & 0 \end{pmatrix}.
 \]
 Then, $V\ge 0$, $2\norm{W}=x=\norm{V}$, and $\norm{V_0}=\norm{V_1}=y$.
\end{remark}

\begin{proof}[Proof of Proposition~\ref{prop:sin2theta}]
 The case $\theta=\pi/2$ is obvious, so suppose that $\theta<\pi/2$. Recall from the proof of~\cite[Proposition~3.3]{Seel14} that
 in this case one has
 \[
  \sin2\theta \le \pi\frac{\norm{\EE_A(\Sigma)U^*VU\EE_A(\sigma)}}{d}
 \]
 with a certain unitary operator $U$ satisfying $U^*QU=\EE_A(\sigma)$. Also recall that the constant $\pi$ in this estimate can be
 replaced by $2$ if $\conv(\sigma)\cap\Sigma=\emptyset$ or $\sigma\cap\conv(\Sigma)=\emptyset$, see,
 e.g.,~\cite[Remark~3.2]{Seel14}. In order to complete the proof it only remains to observe that
 \[
  2\norm{\EE_A(\Sigma)U^*VU\EE_A(\sigma)} \le \norm{U^*VU} = \norm{V},
 \]
 where the inequality follows from Lemma~\ref{lem:semidefinite} by considering $U^*VU\ge0$ with respect to the orthogonal
 decomposition $\cH=\Ran\EE_A(\sigma)\oplus\Ran\EE_A(\Sigma)$.
\end{proof}%

\section*{Acknowledgements}
Parts of the material presented in this work are contained in the author's Ph.D.~thesis~\cite{SeelDiss}. The author is grateful to
Julian Gro{\ss}mann for helpful remarks on the manuscript.


\end{document}